\newcommand{\R}{\mathbb{R}}
\newcommand{\bcen}{\begin{center}}
\newcommand{\ecen}{\end{center}}
\newtheorem{theorem}{Theorem}[section]
\newtheorem{lemma}[theorem]{Lemma}
\newtheorem{remark}[theorem]{Remark}
\def\Ric{{\rm Ric}}
\def\la{\Lambda}
\def\Li{\Lambda_i}
\def\Lj{\Lambda_j}
\def\Lk{\Lambda_{k+1}}
\def\p{\partial}
\def\n{\nabla}
\def\c{\cite}
\def\l{\label}
\def\a{\alpha}
\def\R{\mathbb{R}}
\def\si{\sum_{i=1}^{k}}
\def\sj{\sum_{j=1}^{k}}
\def\O{\Omega}
\def\La{\Lambda}
\def\div{{\mathrm{div}}}
\def\Ric{{\rm Ric}}
\def\R{\mathbb{R}}
\def\inm{\int_M}
\def\ino{\int_\Omega}
\def\a{\alpha}
\def\D{\Delta}
\def\L{\mathbb{L_{\phi}}}
\def\dm{d\mu}
\def\la{\langle}
\def\ra{\rangle}
\def\({\left(}
\def\){\right)}
\begin{document}
\setcounter{page}{1}
\title{Eigenvalue inequalities for the buckling problem of the
drifting Laplacian of arbitrary order}
\author{Feng Du$^{\dag}$,~ Lanbao Hou$^{\dag,\ddag}$,~Jing Mao$^{\ddag,\ast}$,~ Chuanxi Wu$^\ddag$}

\date{}
\protect\footnotetext{\!\!\!\!\!\!\!\!\!\!\!\!{$^{\ast}$Corresponding author}\\
{MSC 2010: 35P15, 53C20, 53C42.}
\\
{ ~~Key Words: Eigenvalues, universal inequalities, the buckling
problem of arbitrary order, the poly-drifting Laplacian, weighted Ricci curvature.} \\
{{\emph{E-mail addresses}}: defengdu123@163.com (F. Du),
houbao79@163.com (L.B. Hou), jiner120@163.com (J. Mao),
cxwu@hubu.edu.cn (C.X. Wu).}}
\maketitle ~~~\\[-15mm]

\begin{center}
{\footnotesize  $^{\dag}$School of Mathematics and Physics
Science,\\
Jingchu University of Technology, Jingmen, 448000, China\\
$^{\ddag}$Faculty of Mathematics and Statistics, \\
Key Laboratory of Applied Mathematics of Hubei Province, \\
Hubei University, Wuhan 430062, China }
\end{center}


\begin{abstract}
In this paper, we investigate the buckling problem of the drifting
Laplacian of arbitrary order on a bounded connected domain in
complete smooth metric measure spaces (SMMSs) supporting a special
function, and successfully get a general inequality for its
eigenvalues. By applying this general inequality, if the complete
SMMSs considered satisfy some curvature constraints, we can obtain a
universal inequalities for eigenvalues of this buckling problem.
 \end{abstract}

\markright{\sl\hfill F. Du, L.B. Hou, J. Mao, C.X. Wu  \hfill}

\section{Introduction}
\renewcommand{\thesection}{\arabic{section}}
\renewcommand{\theequation}{\thesection.\arabic{equation}}
\setcounter{equation}{0} \indent Let $\mathrm{\Omega}$ be a
 bounded domain in an $n$-dimensional complete Riemannian
manifold $M$, and let $\mathrm{\D}$ be the Laplace operator acting
on functions on $M$. Consider the following eigenvalue problems
\begin{eqnarray}\label{a1}
&& (-\D)^{m}u =-\Lambda \D u~~\mbox{in}
~~\Omega,~~~~u=\frac{\partial u}{\partial
\vec{\nu}}=\cdots=\frac{\partial^{m-1} u}{\partial
\vec{\nu}^{m-1}}=0~~\mbox{on}~~\partial
\Omega,\end{eqnarray}
 \begin{eqnarray}\label{a2} && (-\D)^{l}u
=\lambda  u~~\mbox{in} ~~\Omega,~~~~u=\frac{\partial u}{\partial
\vec{\nu}}=\cdots=\frac{\partial^{l-1} u}{\partial
\vec{\nu}^{l-1}}=0~~\mbox{on}~~\partial \Omega,
\end{eqnarray}
where  $\vec{\nu}$ is the outward unit normal vector field of the
boundary $\p \Omega$, $l$ is an arbitrary positive integer and $m$
is an arbitrary positive integer no less than 2. They are called
 \emph{the buckling problem of arbitrary order} and
\emph{the eigenvalue problem of polyharmonic operator},
respectively. The buckling problem (\ref{a1}) is used to describe
the critical buckling load of a clamped plate subjected to a uniform
compressive force around its boundary.

Denote by
\begin{eqnarray*}
&&0<\Lambda_1\leq\Lambda_2\leq\Lambda_3\leq\cdots,\\
&&0<\lambda_1\leq\lambda_2\leq\lambda_3\leq\cdots
\end{eqnarray*}
 the successive eigenvalues for (\ref{a1}) and (\ref{a2}) respectively, where each eigenvalue is repeated according to its multiplicity.
 An important theme in Geometric Analysis is to estimate these (and other) eigenvalues.

If $m=2$ in the bucking problem (\ref{a1}) and $\Omega$ is a bounded
domain in an $n$-dimensional Eucliden space $\mathbb{R}^n$,  Cheng
and Yang \c{CY1} proved the following universal inequality
\begin{eqnarray}\l{a3}
\si
(\La_{k+1}-\La_i)^2\leq\frac{4(n+2)}{n^2}\si(\La_{k+1}-\La_i)\La_{i},
\end{eqnarray}
which gives an answer to a long standing question proposed by Payne,
P\'olya and Weinberger \c{PPW1,PPW}.

If $m=2$ in (\ref{a1}) and $\Omega$ is a bounded domain in an
$n$-dimensional unit sphere $\mathbb{S}^n(1)$,  Wang and Xia \c{WX1}
successfully obtained the following universal inequality
\begin{eqnarray}\l{a4}
\nonumber2\si (\La_{k+1}-\La_i)^2&\leq&\si
(\La_{k+1}-\La_i)^2\(\frac{\delta^2\(\La_i-(n-2)\)}{4\(\Li+n-2\)}+\delta
\Li\)\\&&+\frac{1}{\delta}\si(\La_{k+1}-\La_i)\(\La_i+\frac{(n-2)^2}{4}\),
\end{eqnarray}
where $\delta$ is an arbitrary positive constant.

 Later, Cheng and Yang \c{CY2} gave an
 improvement for the universal inequalities (\ref{a3}) and (\ref{a4}) as
follows
\begin{eqnarray}\l{a5}
\si
(\La_{k+1}-\La_i)^2\leq\frac{4\(n+\frac{4}{3}\)}{n^2}\si(\La_{k+1}-\La_i)\La_i
\end{eqnarray}
and
\begin{eqnarray}\l{a6}
\nonumber&2&\si (\La_{k+1}-\La_i)^2+(n-2)\si
\frac{\(\La_{k+1}-\La_i\)^2}{\La_i-(n-2)}\\&\leq&\si
(\La_{k+1}-\La_i)^2\(\La_i-\(\frac{n-2}{\Li-(n-2)}\)
\)\delta_i+\si\frac{(\La_{k+1}-\La_i)}{\delta_i}\(\La_i+\frac{(n-2)^2}{4}\),
\qquad
\end{eqnarray}
where $\{\delta_i\}_{i=1}^{k}$ is an arbitrary positive
non-increasing monotone sequence.

For arbitrary $m$, when $\O$ is a bounded domain in a Euclidean
space or a unit sphere,  Jost, Li-Jost, Wang and Xia \cite{JLWX}
obtained some universal inequalities for eigenvalues of the buckling
problem (\ref{a1}), which have been improved by Cheng, Qi, Wang and
Xia \cite{CQWX} already.  For bounded domains of some special Ricci
flat manifolds considered in \cite{DWLX1} and of product manifolds
$\mathbb{M}\times \mathbb{R}$  considered in \cite{WX2} (with
$\mathbb{M}$ a complete Riemannian manifold), universal inequalities
for eigenvalues of the  buckling problem (\ref{a1}) have been
obtained therein. For some recent developments about universal
inequalities for eigenvalues of the eigenvalue problem (\ref{a2}) on
Riemannian manifolds, we refer to \cite{CIM1, CIM2,DWLX1,JLWX1} and
the references therein.

A smooth metric measure space (also known as the weighted measure
space, and here written as SMMS for short) is actually a Riemannian
manifold equipped with some measure which is conformal to the usual
Riemannian measure. More precisely, for a given complete
$n$-dimensional Riemannian manifold $(M,\langle,\rangle)$ with the
metric $\langle,\rangle$, the triple
$(M,\langle,\rangle,e^{-\phi}dv)$ is called a SMMS, where $\phi$ is
a \emph{smooth real-valued} function on $M$ and $dv$ is the
Riemannian volume element related to $\langle,\rangle$ (sometimes,
we also call $dv$ the volume density). On a SMMS
$(M,\langle,\rangle,e^{-\phi}dv)$, we can define the the so-called
\emph{drifting Laplacian} (also called \emph{weighted Laplacian})
$\L$ as follows
\begin{eqnarray*}
\L:=\Delta-\langle\nabla{\phi},\nabla\cdot\rangle
\end{eqnarray*}
where $\nabla$ is the gradient operator on $M$, and, as before,
$\Delta$ is the Laplace operator. Some interesting results
concerning eigenvalues of the drifting Laplacian can be found, for
instance, in \cite{CMZ,CZ,FHL,FS,MD,ML1,ML2}. On the SMMS
$(M,\langle,\rangle,e^{-\phi}dv)$, we can also define the so-called
\emph{$\infty$-Bakry-\'{E}mery Ricci tensor} $\mathrm{Ric}^{\phi}$
given by
\begin{eqnarray*}
\mathrm{Ric}^{\phi}=\mathrm{Ric}+\mathrm{Hess}\phi,
\end{eqnarray*}
which is also called the \emph{weighted Ricci curvature}. Here
$\mathrm{Ric}$,  $\mathrm{Hess}$ are the Ricci tensor and the
Hessian operator on $M$, respectively.
 The equation
$\mathrm{Ric}^\phi=\kappa\la,\ra$ for some constant $\kappa$ is just
the gradient Ricci soliton equation, which plays an important role
in the study of Ricci flow. For $\kappa= 0, \kappa> 0, \mathrm{or
}~\kappa< 0$, the gradient Ricci soliton $(M, \langle,\rangle,
e^{-\phi}dv, \kappa)$ is called steady, shrinking, or expanding
respectively. We refer readers to \cite{C} for some recent
interesting results about Ricci solitons.

Let $\O$ be a bounded domain in a complete SMMS
$(M,\langle,\rangle,e^{-\phi}dv)$. Consider the following eigenvalue
problem of the drifting Laplacian
\begin{eqnarray}\l{a7}
\left\{\begin{array}{ccc} \(-\L\)^lu
=\lambda  u,&&~~\mbox{in} ~~\Omega, \\[2mm]
u=\frac{\partial u}{\partial \vec{\nu}}=\cdots=\frac{\partial^{l-1}
u}{\partial \vec{\nu}^{l-1}}=0,&&~~\mbox{on}~~\partial \Omega,
\end{array}\right.
\end{eqnarray}
where, as before, $\vec{\nu}$ is the outward unit normal vector
field of the boundary $\p \Omega$ and $l$ is an arbitrary positive
integer. We know that $\L^l$ is self-adjoint on the space of
functions
 \begin{eqnarray*}
 {\mathbb F}=\left\{ f\in C^{l+2}(\O)\cap C^{l+1}(\p \O):
 f\Bigg|_{\p \O}=\frac{\p f}{\p \vec{\nu}}\Bigg|_{\p \O}=\cdots=\frac{\p^{l-1} f}{\p \vec{\nu}^{l-1}}\Bigg|_{\p \O}=0\right\}
  \end{eqnarray*}
  with respect to the inner product
 \begin{eqnarray*}
 \widetilde{\langle\langle f, g\rangle\rangle} =\int_\Omega f g e^{-\phi}dv:=\int_\Omega f g d\mu,
 \end{eqnarray*}
 and so the eigenvalue problem (\ref{a7}) has a discrete spectrum  whose elements are called eigenvalues and can be listed
 increasingly as follows
$$0< \lambda_1\leq\lambda_2 \leq \cdots \leq \lambda_k \leq \cdots ,$$
where each eigenvalue is repeated with its multiplicity.

For the eigenvalue problem (\ref{a7}), when $l=1$, Xia-Xu \cite{XX}
investigated the eigenvalues of the Dirichlet  problem of the
drifting Laplacian on compact manifolds and got some universal
inequalities; when $l=2$, Du, Wu, Li and Xia \cite{DWLX2} obtained
some universal inequalities of Yang type for eigenvalues of the
bi-drifting Laplacian problem either on a compact Riemannian
manifold with boundary (possibly empty) immersed in a Euclidean
space, a unit sphere or a projective space, or on bounded domains of
complete manifolds supporting some special function; when $l$ is an
arbitrary integer no less than 2, Pereira, Adriano and Pina
\cite{PAP} gave some universal inequalities on bounded domains in a
Euclidean space or a unit sphere, while Du, Mao, Wang and Wu
\cite{DMWW1} successfully obtained some universal inequalities on
bounded domains in the Guassian and cylinder solitons.

In this paper, we will consider the following \emph{buckling problem
(of the drifting Laplacian) of arbitrary order}
\begin{eqnarray}\l{a8}
\left\{\begin{array}{ccc} \(-\L\)^mu
=-\Lambda  \L u,&&~~\mbox{in} ~~\Omega, \\[2mm]
u=\frac{\partial u}{\partial \vec{\nu}}=\cdots=\frac{\partial^{m-1}
u}{\partial \vec{\nu}^{m-1}}=0,&&~~\mbox{on}~~\partial \Omega,
\end{array}\right.
\end{eqnarray}
where, as before, $\vec{\nu}$ is the outward unit normal vector
field of the boundary $\p \Omega$ and $m$ is an arbitrary positive
integer no less than 2. The eigenvalue problem (\ref{a8}) has
discrete spectrum (see Section \ref{sec2} for the details), which
can be listed increasingly as follows
$$0<\La_1\leq\La_2\leq\cdots\leq\La_k\leq\cdots,$$
where each eigenvalue is repeated with its multiplicity.

For the eigenvalue problem (\ref{a8}), we can prove:
\begin{theorem}  \label{theorem 4.1}
Let $(M, \langle,\rangle)$ be a complete connected Riemannian
manifold having weighted Ricci curvature $\Ric^\phi \geq 0$ for some
$\phi\in C^2(M)$, which is bounded above uniformly on $M$, and
containing a line. Then we have
\begin{eqnarray}\label{d3}
\nonumber\si(\Lk-\Li)^2&\leq&2\left\{(2m^2-3m+3)\si(\Lk-\Li)^2
\Li^{\frac{m-2}{m-1}}\right\}^{\frac{1}{2}}\\&&\times\left\{\si
(\Lk-\Li)\Li^{\frac{1}{m-1}}\right\}^{\frac{1}{2}}.
\end{eqnarray}
\end{theorem}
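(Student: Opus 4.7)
The plan is to deduce Theorem~\ref{theorem 4.1} as a direct consequence of the general eigenvalue inequality established earlier in the paper for the buckling problem \eqref{a8} on any bounded domain of a complete SMMS supporting a ``special function'' (one with $|\nabla t|\equiv 1$ and $\L t\equiv 0$). Under the hypotheses of Theorem~\ref{theorem 4.1}, the role of those hypotheses is precisely to produce such a function globally on $M$; everything after that is substitution.

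First, I would invoke the weighted Cheeger--Gromoll splitting theorem for the $\infty$-Bakry--\'Emery Ricci tensor (originating with Lichnerowicz and refined for complete SMMSs by Fang--Li--Zhang and Wei--Wylie): on a complete connected $(M,\langle,\rangle)$ with $\Ric^\phi \geq 0$, with $\phi$ uniformly bounded above on $M$, and with $M$ containing a line, the metric splits isometrically as $M = N \times \R$, and moreover $\phi$ is constant along the $\R$-factor. Letting $t : M \to \R$ denote the projection onto the line, one then has $|\nabla t|^2 \equiv 1$, $\mathrm{Hess}\,t \equiv 0$, and
\[
\L t \;=\; \Delta t - \langle \nabla \phi, \nabla t\rangle \;=\; 0,
\]
since $\phi$ is independent of $t$. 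Thus $t$ is a bona fide global special function on $M$ to which the general inequality applies.

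Second, I would substitute $t$ into that general inequality. The ambient geometry enters only through the quantities $\bigl\|\,|\nabla t|^2 - 1\,\bigr\|_{\infty}$, $\|\L t\|_{\infty}$, and scalar expressions in $\mathrm{Hess}\,t$; all of these vanish in our setting, so the inequality collapses to a relation involving only the $\Li$. A Cauchy--Schwarz step then splits the surviving sum into the product of the two factors displayed in \eqref{d3}; the constant $2m^2 - 3m + 3$ emerges from the combinatorics and the estimates for the commutator terms generated by expanding $\L^m(tu_i)$ via Leibniz when $\L t = 0$ and $|\nabla t| = 1$ are used repeatedly.

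The main obstacle is the rigorous justification of the splitting step. Classical Cheeger--Gromoll is insufficient here because the governing Bochner--Weitzenb\"ock identity is the weighted one driven by $\Ric^\phi$, and the Busemann-function construction along the line must be made compatible with the drift $\phi$. This is precisely why the assumption that $\phi$ be uniformly bounded above is imposed: without it the weighted splitting is known to fail in general, and no globally defined $t$ with $\L t = 0$ and $|\nabla t|=1$ need exist. Once that step is granted, the remainder of the proof is a mechanical substitution into the general inequality followed by the Cauchy--Schwarz reorganization indicated above.
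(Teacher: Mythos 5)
Your proposal follows essentially the same route as the paper: the paper likewise invokes the weighted splitting theorem (the quoted result of Fang--Li--Zhang, strengthened via Wei--Wylie when $M$ contains a line) to obtain a coordinate function $x_1$ on the $\mathbb{R}$-factor satisfying $\nabla^2 x_1=0$, $\L x_1=0$, $|\nabla x_1|=1$ and $\mathrm{Ric}^{\phi}(\nabla x_1,Y)=0$, and then feeds this special function into the general inequality of Theorem \ref{theorem3.1} combined with the estimates of Lemmas \ref{lemma2.1} and \ref{lemma3.2} and a final Cauchy--Schwarz reorganization (Lemma \ref{lemma3.3}). The only point you leave implicit is the hypothesis $\mathrm{Ric}^{\phi}(\nabla t,Y)=0$ required by Theorem \ref{theorem3.1}, which also follows immediately from the splitting, so the argument is complete.
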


\begin{remark}
\rm{ (1) By (\ref{d3}), it is easy to get
\begin{eqnarray*}
\si(\Lk-\Li)^{2}\leq4(2m^2-3m+3)\Lambda_{k}^{\frac{m-2}{m-1}}\si
(\Lk-\Li)\Li^{\frac{1}{m-1}}
\end{eqnarray*}
 under the assumptions of Theorem \ref{theorem 4.1}. \\
(2) Especially, when $m=2$, under the assumptions of Theorem
\ref{theorem 4.1}, one has
\begin{eqnarray*}
\si(\Lk-\Li)^{2}\leq20\si (\Lk-\Li)\Li
\end{eqnarray*}
by applying (\ref{d3}) directly. \\
 (3) When $m=2$, the buckling problem (\ref{a8}) degenerates into
 the one considered in \cite{DMWW2} where Du, Mao, Wang and Wu firstly obtained
 universal inequalities on bounded connected domains on the Gaussian
 shrinking soliton
 $\left(\mathbb{R}^{n},\langle,\rangle_{can},e^{-\frac{1}{4}|x|}dv,\frac{1}{2}\right)$,
 with $x\in\mathbb{R}^{n}$, and on the gradient Ricci soliton $\left(\Sigma\times\mathbb{R},\langle,\rangle,e^{-\frac{\kappa
 t^2}{2}}dv,\kappa\right)$, with $\Sigma$ an Einstein manifold of constant Ricci
 curvature $\kappa$, $x\in\Sigma$ and $t\in\mathbb{R}$. In this
 sense, our universal inequality (\ref{d3}) here can be seen as a continuation of those in \cite{DMWW2}.
 }
\end{remark}

\section{Preliminaries} \label{sec2}
\renewcommand{\thesection}{\arabic{section}}
\renewcommand{\theequation}{\thesection.\arabic{equation}}
\setcounter{equation}{0} \indent In this section, firstly, inspired
by Cheng and Yang \c{CY1}, let us construct trial functions for the
buckling problem (\ref{a8}).

Let $\O$ be a bounded domain with smooth boundary in the complete
SMMS $(M,\widetilde{g},e^{-\phi}dv)$. Since any complete Riemannian
manifold can be isometrically embedded in some Euclidaen space, we
can treat our $M$ as a submanifold of some $\R^q$. Let us denote by
$\langle,\rangle$ the canonical metric on  $\R^q$ as well as that
induced on $M$. As before, $d\mu=e^{-\phi}dv$, denote by $\D$ and
$\n$ the Laplacian and the gradient operator of $M$, respectively.
Let $u_i$ be the $i$-th orthonormal eigenfunctions of the buckling
problem (\ref{a8}) corresponding to the eigenvalue $\Li$, namely,
$u_i$ satisfies
\begin{eqnarray}\label{b1}
\left\{\begin{array}{ccc} (-\L)^m u_i
=-\Lambda_i \L u_i,&&~~\mbox{in} ~~\Omega, \\[2mm]
u_i=\frac{\partial u_i}{\partial \nu}=\cdots=\frac{\partial u_i}{\partial \nu^{m-1}}=0,&&~~\mbox{on}~~\partial
\Omega,\\[2mm]
\ino \langle\n u_i,\n u_j\rangle\dm=\delta_{ij}.
\end{array}\right.
\end{eqnarray}

 For functions $f$ and $g$ on $\O$, the Dirichlet inner product $(f, g)_D$ of $f$ and $g$ is given by
$$(f, g)_D =\ino \langle\n f, \n g\rangle\dm.$$
The Dirichlet norm of a function $f$ is defined by
$$\|f\|_D=\{(f, f)_D\}^{\frac{1}{2}}=\(\ino |\n f|^2\dm\)^{\frac{1}{2}}.$$
Let $\n^k$ be the denote the $k$-th covariant derivative operator on
$M$, defined in the usual weak sense. For a function $f$ on $\O$,
the squared norm of $\n^kf$ is defined as (cf. \cite{H})
\begin{eqnarray*}
\left|\n^k f\right|^2=\sum_{i_1,\cdots,i_k=1}^n\(\n^k f(e_{i_1},\cdots, e_{i_k})\)^2,
\end{eqnarray*}
where $e_1, \cdots , e_n$ are orthonormal vector fields locally defined on $\O$. Define the Sobolev space $H^2_m(\O)$ by
\begin{eqnarray*}
H^2_m(\O)=\left\{f: f, |\n f|,\cdots, |\n^m f| \in L^2(\O)\right\}.
\end{eqnarray*}
Then $H^2_m(\O)$ is a Hilbert space with respect to the inner product $\langle\langle,\rangle\rangle$:
\begin{eqnarray*}
\langle\langle f, g\rangle\rangle=\ino \(\sum_{k=0}^m\n^k f\cdot\n^k
g\)\dm,
\end{eqnarray*}
where
$$\n^k f\cdot\n^k g=\sum_{i_1,\cdots,i_k=1}^n \n^k f(e_{i_1},\cdots, e_{i_k})\n^k g(e_{i_1},\cdots, e_{i_k}).$$

Consider the subspace $H_{m,D}^2(\O)$ of $H_m^2(\O)$ defined by
\begin{eqnarray*}
H^2_{m,D}(\O)=\left\{f\in H^2_m(\O): f\Bigg|_{\p\O}=\frac{\p f}{\p
\vec{\nu}}\Bigg|_{\p\O}\cdots=\frac{\p^{m-1} f}{\p
\vec{\nu}^{m-1}}\Bigg|_{\p\O}=0 \right\}.
\end{eqnarray*}
The poly-drifting Laplacian operator $\L^m$ defines a self-adjoint
operator acting on $H^2_{m,D}(\O)$ with discrete eigenvalues $0
\leq\La_1 \leq\cdots \leq \La_k \leq \cdots$ for the buckling
problem (\ref{a8}) and the eigenfunctions $\{u_i\}^{+\infty} _{i=1}$
defined in (\ref{b1}) form a complete orthonormal basis for the
Hilbert space $H^2_{2,D}(\O)$. If $\psi\in H^2_{2,D}(\O)$  satisfies
$\(\psi, u_j\)_D=0, \forall j=1,\cdots,k,$ then the Rayleigh-Ritz
inequality tells us that
\begin{eqnarray}\label{b2}
\La_{k+1}\|\psi\|^2_D\leq\ino \psi (-\L)^m\psi\dm.
\end{eqnarray}
For vector-valued functions $F = (f_1, f_2, \cdots , f_m), G = (g_1, g_2, \cdots , g_m): \O
 \rightarrow \mathbb{R}^q$, we define an inner product $(F,G)$ by
$$(F,G)=\ino \langle F, G\rangle\dm=\ino \sum_{\a=1}^qf_\a g_\a\dm.$$
The norm of $F$ is given by
$$\|F\|=(F,F)^{\frac{1}{2}}=\left(\ino \sum_{\a=1}^qf_\a^2\dm\right)^{\frac{1}{2}}.$$
Let $\mathbf{H}^2_m(\O)$ be the Hilbert space of vector-valued functions given by
\begin{eqnarray*}
\mathbf{H}^2_{m}(\O)=\left\{F = (f_1, f_2, \cdots , f_m): \O
 \rightarrow \mathbb{R}^q; f_\a: f_a, |\n f_\a|\in L^2(\O)~~\mathrm{for}~\a=1,\cdots,n \right\}.
\end{eqnarray*}
with norm $\|\cdot\|_1$:
$$\|F\|_1=\(\|F\|^2+\ino \sum_{\a=1}^q|\n f_\a|^2\dm\)^{\frac{1}{2}}.$$
Observe that a vector field on $\O$ can be regarded as a
vector-valued function from $\O$ to $\mathbb{R}^q$. Let
$\mathbf{H}^2_{m,D}(\O)$
 be a subspace of $\mathbf{H}^2_m(\O)$ spanned by the vector-valued functions $\{\n u_i\}_{i=1}^\infty$
which form a complete orthonormal basis of $\mathbf{H}^2_{m,D}(\O)$. For any $f \in H^2_{m,D}(\O)$, we have $\n f\in
\mathbf{H}^2_{m,D}(\O)$ and for any $X \in \mathbf{H}^2_{m,D}(\O)$, there exists a function $f \in H^2_{m,D}(\O)$ such that $X = \n f$.
Consider the function $g :\O
 \rightarrow \mathbb{R}$, then the vector fields $g\n u_i$ can be
decomposed as
\begin{eqnarray}\label{b3}
g\n u_i=\n h_{ i}+\mathbf{W}_{ i},
\end{eqnarray}
where $h_{ i} \in H_{m,D}^2(\Omega)$, $\n h_{ i}$ is the
projection of $g\n u_i$ in $\mathbf{H}_{m,D}^2(\Omega)$ and
$\mathbf{W}_{i}\bot \mathbf{H}_{m,D}^2(\Omega)$. Thus, we have,
for any function $h \in C^1(\Omega)\cap L^2(\Omega)$,
\begin{eqnarray*}
\ino \langle\mathbf{W}_{ i}, \n h\rangle\dm=0.
\end{eqnarray*}
Hence, $\mathbf{W}_{i}$ satisfies
\begin{eqnarray}\label{b4}
\left\{\begin{array}{ccc}\mathbf{W}_{ i}|_{\p \Omega}
=0, \\[2mm]
\div \mathbf{W}_{ i}-\langle\mathbf{W}_{ i},\n\phi\rangle=0.
\end{array}\right.
\end{eqnarray}

  At the end of this section, we would like to mention two facts.
  First,
 a simple calculation gives the following
Bochner formula for the drifting Laplacian (see \cite{WW}): for any
$f\in C^3(\O)$,
\begin{eqnarray*}
\frac{1}{2}\L|\n f|^2=|\n^2f|^2+\langle\n f, \n(\L
f)\rangle+\mathrm{Ric}^\phi(\n f,\n f).\end{eqnarray*}
 Hence, on the SMMS $(M, g, e^{-\phi}dv)$,
for any functions $f, g\in C^3(\O)$, we have
\begin{eqnarray*}
\L\langle \n f,\n g\rangle=2\langle\n^2f,\n^2g\rangle+\langle\n
f,\n(\L g)\rangle+\langle\n g,\n(\L f)\rangle+2\Ric^\phi\langle\n f,\n g\rangle.
\end{eqnarray*}
Furthermore, if $g$ satisfies $\n^2 g=0,\L g=0,\Ric^\phi\langle Y,\n
g\rangle$=0, where $Y$ is any vector field on $M$, then we have
\begin{eqnarray}\label{b5}
\L\langle \n g,\n f\rangle=\langle\n g,\n(\L f)\rangle.
\end{eqnarray}
Second, using a similar calculation as that in the proof of
\cite[Lemma 2.1]{JLWX}, we can get the following fact:
\begin{lemma}\label{lemma2.1}
Let $\La_i$ be the $i$-th, $i=1, 2, \cdots$, eigenvalue of the
eigenvalue problem (\ref{a8}), and $u_i$ be the orthonormal
eigenfunction corresponding to $\Lambda_i$. Then
\begin{eqnarray*}\label{b6}
0\leq\int_{\Omega} u_i(-\L)^k u_i d\mu\leq
(\La_i)^{\frac{k-1}{m-1}}, \qquad k=1,2, \cdots, m-1.
\end{eqnarray*}
\end{lemma}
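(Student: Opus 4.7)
The plan is to set $a_k:=\ino u_i(-\L)^k u_i\dm$ for $1\le k\le m$, pin down the endpoints $a_1=1$ and $a_m=\Lambda_i$, prove the discrete log-convexity $a_k^2\le a_{k-1}a_{k+1}$ by two Cauchy--Schwarz inequalities, and finish by interpolating between the endpoints.

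The endpoints are immediate. Integrating by parts once (the boundary term vanishes because $u_i|_{\p\O}=0$), the normalization $\ino|\n u_i|^2\dm=1$ from (\ref{b1}) becomes $a_1=\ino u_i(-\L)u_i\dm=1$. The eigenvalue equation $(-\L)^m u_i=\Lambda_i(-\L)u_i$ then produces $a_m=\Lambda_i a_1=\Lambda_i$.

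Write $v_j:=(-\L)^j u_i$. The boundary conditions on $u_i$ (namely $u_i=\p u_i/\p\vec\nu=\cdots=\p^{m-1}u_i/\p\vec\nu^{m-1}=0$ on $\p\O$) are exactly strong enough to justify all the integrations by parts I shall use: in particular, $a_{p+q}=\ino v_p v_q\dm$ for every $p,q\ge 0$ with $p+q\le 2m-1$, and $v_{j-1}|_{\p\O}=0$ whenever $2(j-1)\le m-1$. Consequently $a_{2j}=\ino v_j^2\dm\ge 0$ and $a_{2j+1}=\ino|\n v_j|^2\dm\ge 0$, which settles the non-negativity. For the log-convexity, the odd case $k=2j+1$ is the standard $L^2(\dm)$ Cauchy--Schwarz applied to $a_{2j+1}=\ino v_j v_{j+1}\dm$,
\begin{eqnarray*}
a_{2j+1}^2 \le \ino v_j^2\dm\cdot\ino v_{j+1}^2\dm = a_{2j}\,a_{2j+2},
\end{eqnarray*}
while the even case $k=2j$ needs the slightly more subtle Cauchy--Schwarz in the Dirichlet-energy pairing $(f,g)\mapsto\ino\la\n f,\n g\ra\dm$. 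Rewriting $a_{2j}=\ino v_{j-1}v_{j+1}\dm=\ino\la\n v_{j-1},\n v_j\ra\dm$ and then applying Cauchy--Schwarz yields
\begin{eqnarray*}
a_{2j}^2 \le \ino|\n v_{j-1}|^2\dm\cdot\ino|\n v_j|^2\dm = a_{2j-1}\,a_{2j+1}.
\end{eqnarray*}

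Once log-convexity of $k\mapsto\log a_k$ on the integers $\{1,\dots,m\}$ is in hand, the increments $\log a_{k+1}-\log a_k$ are non-decreasing in $k$, so together with the pinned endpoints $\log a_1=0$ and $\log a_m=\log\Lambda_i$ I get
\begin{eqnarray*}
a_k\le a_1^{\frac{m-k}{m-1}}\,a_m^{\frac{k-1}{m-1}}=\Lambda_i^{\frac{k-1}{m-1}},
\end{eqnarray*}
which is the claimed upper bound. The only genuinely non-routine step is the even-index Cauchy--Schwarz, which exploits the positive semi-definite Dirichlet-energy pairing rather than the naive $L^2(\dm)$ one; the surrounding bookkeeping of boundary contributions is straightforward, but does need to be checked against the vanishing orders of $u_i$ and its normal derivatives on $\p\O$.
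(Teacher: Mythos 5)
Your proposal is correct and is essentially the argument the paper itself relies on: the paper gives no proof of Lemma \ref{lemma2.1} but defers to \cite[Lemma 2.1]{JLWX}, whose proof is exactly this scheme --- pin the endpoints $a_1=1$ and $a_m=\La_i$, establish $a_k^2\le a_{k-1}a_{k+1}$ by alternating the $L^2(\dm)$ and Dirichlet-pairing Cauchy--Schwarz inequalities, and interpolate. The only blemish is the blanket claim that $a_{p+q}=\ino v_p v_q\,\dm$ for all $p+q\le 2m-1$, which is more than the boundary conditions on $u_i$ actually justify; however, your argument only uses the near-balanced splittings with $\min(p,q)\le m/2$, for which every integration by parts is legitimate, so this imprecision does not affect the proof.
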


\section{A universal inequality of the buckling problem (of
the drifting Laplacian) of arbitrary order}
\renewcommand{\thesection}{\arabic{section}}
\renewcommand{\theequation}{\thesection.\arabic{equation}}
\setcounter{equation}{0} \indent In this section, first, we will
give a general inequality on a bounded domain in SMMSs supporting a
special function.

\begin{theorem} \label{theorem3.1}
Given  an $n$-dimensional complete SMMS $(M, \langle,\rangle,
e^{-\phi}dv)$ supporting a function $g$ such that $\n^2g=0$, $\L
g=0$, $|\n g|^2=1$ and $\Ric^\phi\langle Y,\n g\rangle$=0 for any
vector field $Y\in \mathscr{X}(M)$, where $\mathscr{X}(M)$ denotes
the set of smooth vector fields on $M$. Let $\O$ be a bounded
connected domain in $M$ and let $\Lambda_i$ be the $i$-th eigenvalue
of the buckling problem (\ref{a8}). Then we have
\begin{eqnarray}\label{c1}
\nonumber&&\sum_{i=1}^k\(\La_{k+1}-\La_i\)^2\\\nonumber&\leq&\delta\sum_{i=1}^k\(\La_{k+1}-\La_i\)^2\Bigg\{(-1)^m\ino(-m+1)\Big(u_i\L^{m-1} u_i\\\nonumber&&+(2m^2-4m+3)\langle\n g,\n
u_i\rangle\left\langle\n g,\n\(\L^{m-2} u_i\)\right\rangle\Big)\dm
\\\nonumber&&-2\Lambda_i^{\frac{m-2}{m-1}}\|\langle\n g,\n u_i\rangle\|^2+\Lambda_i^{\frac{m-3}{m-1}}\|\n\langle\n g,\n u_i\rangle\|^2\Bigg\}\\&&+\sum_{i=1}^k\frac{\(\La_{k+1}-\La_i\)}{\delta}\|\n\langle\n g,\n u_i\rangle\|^2.
\end{eqnarray} where $\|f\|^2=\int_{\Omega} f^2\dm$.
\end{theorem}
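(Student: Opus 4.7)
The strategy mirrors the approach of Jost--Li-Jost--Wang--Xia and Cheng--Yang for the higher-order buckling problem, with the function $g$ playing the role of a Euclidean coordinate function. The four conditions on $g$ (vanishing Hessian, $\L g=0$, unit gradient, and $\Ric^\phi(\cdot,\n g)=0$) are exactly what is needed so that $g$ commutes with $\L$ in the clean way that coordinate functions do in flat space.

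\textbf{Step 1 (trial functions).} For each $i=1,\dots,k$ I would set
$$\varphi_i \;=\; g\,u_i - \sum_{j=1}^{k} r_{ij}\,u_j,$$
and fix the scalars $r_{ij}$ by the requirement $(\varphi_i, u_j)_D=0$ for all $j\le k$. Since $u_i$ and its first $m-1$ normal derivatives vanish on $\p\Omega$, Leibniz' rule gives $\varphi_i\in H^2_{m,D}(\Omega)$, so $\varphi_i$ is admissible in the Rayleigh--Ritz inequality (\ref{b2}).

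\textbf{Step 2 (commutator identity).} Using $\L g=0$, a direct calculation yields $\L(gf)=g\L f+2\la\n g,\n f\ra$ for any smooth $f$. Iterating this together with (\ref{b5}), which gives $\L\la\n g,\n f\ra=\la\n g,\n\L f\ra$, an easy induction on $p$ produces
$$\L^{p}(g u_i) \;=\; g\,\L^{p}u_i \;+\; 2p\,\la\n g,\n\L^{p-1}u_i\ra .$$
Combined with the eigenvalue equation $(-\L)^m u_i=-\Li\L u_i$, this reduces $(-\L)^m\varphi_i$ to a $g$-multiple of an eigenfunction plus a purely horizontal $\la\n g,\cdot\ra$-correction.

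\textbf{Step 3 (Rayleigh--Ritz, sum, antisymmetrize).} Substituting $\varphi_i$ into (\ref{b2}) and subtracting $\Li\|\varphi_i\|_D^2=\Li\int\varphi_i(-\L)\varphi_i\,d\mu$ gives, for each $i$,
$$(\Lk-\Li)\|\varphi_i\|_D^2 \;\le\; \int_\Omega \varphi_i\bigl[(-\L)^{m}-\Li(-\L)\bigr]\varphi_i\,d\mu .$$
Expanding the right-hand side with the Step~2 formula, I would then multiply by $(\Lk-\Li)$, sum on $i$, and exploit the symmetry/antisymmetry of the resulting double sum $\sum_{i,j}(\Lk-\Li)\,r_{ij}(\cdots)$ to cancel every term that carries an $r_{ij}$ (the standard Payne--P\'olya--Weinberger manoeuvre, using that $r_{ij}$ depends on both $u_i$ and $u_j$ while the kernel is symmetric up to an $(\Lk-\Li)$-factor). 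What survives is a sum involving $\int u_i(-\L)^{m-1}u_i\,d\mu$, $\int\la\n g,\n u_i\ra\la\n g,\n\L^{m-2}u_i\ra\,d\mu$, $\|\la\n g,\n u_i\ra\|^2$, and $\|\n\la\n g,\n u_i\ra\|^2$.

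\textbf{Step 4 (Cauchy--Schwarz and the $L^p$ bounds).} The leading cross term produced in Step~3 has the structure $\sum_i(\Lk-\Li)\cdot A_i\cdot B_i$, where $A_i$ is the piece I want decorated with $(\Lk-\Li)^2$ and $B_i$ involves $\|\n\la\n g,\n u_i\ra\|$. Applying $2AB\le \delta A^2+\delta^{-1}B^2$ term by term separates these into the two groupings on the right of (\ref{c1}). Finally, Lemma~\ref{lemma2.1} converts each integral $\int u_i(-\L)^{p}u_i\,d\mu$ into $\Li^{(p-1)/(m-1)}$, producing the factors $\Li^{(m-2)/(m-1)}$ and $\Li^{(m-3)/(m-1)}$ in (\ref{c1}). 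The main obstacle is the algebraic bookkeeping: checking the $(-1)^m$ signs exactly, verifying that the combinatorial coefficient $2m^2-4m+3$ falls out of the telescoping in the commutator identity and the Bochner manipulations of Step~2, and confirming that the antisymmetrization in Step~3 really annihilates every term that still contains a coefficient $r_{ij}$. The correct choice of how to split the cross term in the Cauchy--Schwarz step is also delicate, since a na\"ive split would spoil the $\delta^{-1}$-weight on $\|\n\la\n g,\n u_i\ra\|^2$.
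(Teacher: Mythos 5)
Your Steps 2 and 4 are on target: the commutator identity $\L^{p}(gu_i)=g\,\L^{p}u_i+2p\la\n g,\n \L^{p-1}u_i\ra$ is exactly what the paper uses to generate the coefficients, and Lemma \ref{lemma2.1} is indeed how the powers $\Li^{\frac{m-2}{m-1}}$ and $\Li^{\frac{m-3}{m-1}}$ enter. But Step 1 contains a genuine gap: the trial function $\varphi_i=gu_i-\sum_j r_{ij}u_j$ does not make the argument close for the buckling problem, and the paper does not use it. The Rayleigh--Ritz quotient for (\ref{a8}) has the Dirichlet norm $\|\n\varphi_i\|^2$ in the denominator, so after you apply Cauchy--Schwarz to the residual $\int_\O\varphi_i\,w_i\,\dm$ (with $w_i$ the first-order correction from your Step 2) you are left needing to control $L^2$-type quantities by $\|\n\varphi_i\|^2$, and the extra piece $u_i\n g$ hidden inside $\n\varphi_i=u_i\n g+g\n u_i-\sum_j r_{ij}\n u_j$ cannot be disposed of. This is precisely the obstruction that kept the Payne--P\'olya--Weinberger question for the buckling problem open until Cheng--Yang, and it is why the paper instead decomposes the \emph{vector field} $g\n u_i=\n h_i+\mathbf{W}_i$ as in (\ref{b3})--(\ref{b4}), with $\n h_i$ the projection onto $\mathbf{H}^2_{m,D}(\O)$ and $\mathbf{W}_i$ weighted-divergence-free and vanishing on $\p\O$, and takes $\psi_i=h_i-\sum_j b_{ij}u_j$ (not $gu_i-\sum_j r_{ij}u_j$) as the trial function.

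The decomposition is not cosmetic: the terms $-2\Li^{\frac{m-2}{m-1}}\|\la\n g,\n u_i\ra\|^2+\Li^{\frac{m-3}{m-1}}\|\n\la\n g,\n u_i\ra\|^2$ in (\ref{c1}) arise from writing $u_i\n g=\mathbf{A}_i-\mathbf{W}_i$ with $\mathbf{A}_i=\n(gu_i-h_i)$, using the orthogonality $\|u_i\|^2=\|\mathbf{A}_i\|^2+\|\mathbf{W}_i\|^2$ to cancel $\|\mathbf{W}_i\|^2$ against its contribution to $\|\n\psi_i\|^2$, and then estimating $-\Li\|\mathbf{A}_i\|^2$ via $2\|\la\n g,\n u_i\ra\|^2=-2\int_\O\la\mathbf{A}_i,\n\la\n g,\n u_i\ra\ra\dm$, which itself uses $\int_\O\la\mathbf{W}_i,\n(\cdot)\ra\dm=0$. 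None of these quantities exist in your setup, so your route cannot produce the stated right-hand side. Moreover, the normalization that seeds the whole chain is $1=-2\int_\O\la g\n u_i,Z_i\ra\dm=-2\int_\O\la\n\psi_i,Z_i-\sum_j c_{ij}\n u_j\ra\dm-2\sum_j b_{ij}c_{ij}$ with $Z_i=\n\la\n g,\n u_i\ra$, $b_{ij}$ symmetric and $c_{ij}$ antisymmetric; the $\delta$-weighted Cauchy--Schwarz is applied to \emph{this} identity, not to the Rayleigh--Ritz residual as you describe. Finally, your antisymmetrization claim needs repair even on its own terms: with $r_{ij}=\int_\O\la\n(gu_i),\n u_j\ra\dm$ the coefficient splits into the symmetric part $b_{ij}$ plus a genuinely antisymmetric part $\int_\O u_i\la\n g,\n u_j\ra\dm$, so the clean cancellation of all $r_{ij}$-terms that you invoke does not occur. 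To fix the proposal, replace Step 1 by the decomposition (\ref{b3})--(\ref{b4}) and rebuild Steps 3--4 around $\psi_i$, $\mathbf{W}_i$, $\mathbf{A}_i$ and the identities above.
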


\begin{proof} Consider the function $\psi_{ i}: \Omega\mapsto \R$
given by
\begin{eqnarray}\label{c2}
\psi_{ i}= h_{ i}-\sj b_{ ij}u_j,
\end{eqnarray}
where $ b_{ ij}=\ino g \langle \n u_i,\n u_j\rangle d\mu=b_{ ji}$,
and $h_i$ is determined by (\ref{b3}). It is easy to check that
$\psi_{ i}$ satisfies
$$\psi_i\Bigg|_{\p\O}=\frac{\p \psi_i}{\p\vec{\nu}}\Bigg|_{\p\O}=\cdots=\frac{\p^{m-1}
\psi_i}{\p\vec{\nu}^{m-1}}\Bigg|_{\p\O}=0,~~\mathrm{and}~~ \ino
\langle\n \psi_{ i}, \n u_j\rangle d\mu=0$$ for any $j=1, \cdots,
k$. It therefore follows from the Rayleigh-Ritz inequality  that
\begin{eqnarray}\label{c3}
\La_{k+1}\ino|\n\psi_{i}|^2\dm\leq\ino\psi_{ i}(-\L)^m\psi_{ i}\dm
,\qquad \forall i=1,\cdots,k.
\end{eqnarray}
From (\ref{c2}), we have
$\L \psi_{ i}=\L h_{i}-\sj b_{ ij}\L u_j,$
and
\begin{eqnarray}\label{c4}
(-\L)^m \psi_{ i}=(-\L)^m h_i+\sj\La_j b_{ ij}\L u_j.
\end{eqnarray}
Observing that $\ino \psi_{ i}\L u_j\dm=-\ino\langle \n\psi_{ i}, \n
u_j\rangle\dm=0,$ and $$\inm\langle\mathbf{W}_{ i}, \n
h\rangle\dm=0, \qquad \forall h\in C^1(\Omega)\cap L^2(\Omega), $$
we have
\begin{eqnarray}\label{c5}
\nonumber\ino\psi_{ i}(-\L)^m \psi_{ i}\dm&=&\ino\psi_{
i}(-\L)^mh_i\dm\\\nonumber&=&\ino(-\L)^mh_i(h_i-\sj
b_{ij}u_j)\dm\\\nonumber&=&\ino h_i(-\L)^mh_i\dm-\sj b_{ij}\ino u_j(-\L)^m
h_i\dm
\\\nonumber&=&\ino h_i(-\L)^mh_i\dm-\sj b_{ij}\ino (-\L)^m u_jh_i\dm\\\nonumber&=&\ino h_i(-\L)^m
h_i\dm-\sj \Lj b_{ij}\ino \langle\n u_j,\n h_i\rangle\dm
 \\\nonumber&=& \ino h_i(-\L)^mh_i\dm-\sj \Lj b_{ij}\ino\langle\n
u_j,g\n u_i-\mathbf{W}_i \rangle\dm\\&=&\ino h_i(-\L)^mh_i\dm-\sj \La_j
 b_{ ij}^2,
\end{eqnarray}
and
\begin{eqnarray}\label{c6}
\nonumber&&\ino h_i(-\L)^mh_i\\\nonumber&=&\ino \langle \n h_i, \n((-\L)^{m-1}
h_i)\rangle\dm\\\nonumber&=&\ino \langle g\n u_i-\mathbf{W}_i,
\n((-\L)^{m-1} h_i) \rangle\dm\\\nonumber&=&\ino \langle g\n u_i,
\n((-\L)^{m-1} h_i) \rangle\dm\\\nonumber&=&-\ino(\langle\n g,\n u_i\rangle+g\D
u_i)(-\L)^{m-1} h_i\dm\\\nonumber&=&-\ino\((-\L)^{m-2}\(\langle\n g,\n
u_i\rangle+g\L u_i\)\)(-\L)
h_i\dm\\\nonumber&=&-\ino\left\langle\n\((-\L)^{m-2}\(\langle\n g,\n
u_i\rangle+g\L u_i\)\),\n
h_i\right\rangle\dm\\\nonumber&=& -\ino\left\langle\n\((-\L)^{m-2}\(\langle\n g,\n
u_i\rangle+g\L u_i\)\),g\n
u_i-\mathbf{W}_i\right\rangle\dm
\\\nonumber&=& -\ino\left\langle\n\((-\L)^{m-2}\(\langle\n g,\n
u_i\rangle+g\L u_i\)\),g\n
u_i\right\rangle\dm\\&=& \ino
\(-\L\)^{m-2}\(\langle\n g,\n
u_i\rangle+g\L u_i\)\(\langle\n g,\n
u_i\rangle+g\L u_i\)\dm.
\end{eqnarray}

It follows from $\L g=0$ and (\ref{b5}) that
\begin{eqnarray*}
\L\(\langle\n g,\n
u_i\rangle+g\L u_i\)=g\L^2 u_i+\langle\n g,\n\(3\L u_i\)\rangle,
\end{eqnarray*}
which implies
\begin{eqnarray}\label{c7}
\L^{m-2}\(\langle\n g,\n
u_i\rangle+g\L u_i\)=g\L^{m-1} u_i+(2m-3)\langle\n g,\n\(\L^{m-2} u_i\)\rangle
\end{eqnarray}
Combining (\ref{c4})-(\ref{c7}), we have
\begin{eqnarray}\label{c8}
\nonumber&&\ino\psi_{ i}(-\L)^m \psi_{ i}\dm\\\nonumber
&=&(-1)^m\ino(2m-3)\(g\L u_i\left\langle\n g,\n\(\L^{m-2} u_i\)\right\rangle+\langle\n g,\n
u_i\rangle\left\langle\n g,\n\(\L^{m-2} u_i\)\right\rangle\)\dm
\\&&+(-1)^m\ino \(g^2\L u_i\L^{m-1}u_i+g\L^{m-1}u_i\langle\n g,\n
u_i\rangle\)\dm-\sj \La_j
 b_{ ij}^2.
\end{eqnarray}
On one hand,
\begin{eqnarray}\label{c9}
\nonumber&&\ino\(gu_i\langle\n g,\n(\L^{m-1}u_i)\rangle\)\dm\\\nonumber&=&\ino\(gu_i \L^{m-1}\langle\n g,\n u_i\rangle\)\dm
\\\nonumber&=&\ino\(\L^{m-1}(gu_i) \langle\n g,\n u_i\rangle\)\dm\\&=&\ino\((g \L^{m-1}u_i+2(m-1)\langle\n g,\n(\L^{m-1}u_i)\rangle) \langle\n g,\n u_i\rangle\)\dm.
\end{eqnarray}
On the other hand,
\begin{eqnarray}\label{c10}
\ino\(gu_i\langle\n g,\n(\L^{m-1}u_i)\rangle\)\dm=-\ino\L^{m-1}u_i\(u_i+g\langle\n g,\n u_i\rangle\)\dm.
\end{eqnarray}
Then we infer from (\ref{c9}) and (\ref{c10}) that
\begin{eqnarray*}\label{c11}
\ino\(gu_i\langle\n g,\n(\L^{m-1}u_i)\rangle\)\dm=\ino\((m-1)\langle\n g,\n(\L^{m-2}u_i)\rangle \langle\n g,\n u_i\rangle-\frac{1}{2}u_i\L u_i\)\dm.
\end{eqnarray*}
Hence
\begin{eqnarray}\label{c12}
\nonumber&&\ino g\L^{m-1}u_i\langle\n g,\n
u_i\rangle\dm\\
\nonumber &=&-\ino\(u_i\L^{m-1}u_i+gu_i\langle\n
g,\n(\L^{m-1}u_i)\rangle\)\dm
\\
 &=&-\ino\((m-1)\langle\n g,\n(\L^{m-2}u_i)\rangle \langle\n g,\n u_i\rangle+\frac{1}{2}u_i\L
 u_i\)\dm.
\end{eqnarray}
By a direct computation, we have
\begin{eqnarray}\label{c13}
&& \nonumber\ino g\L u_i\langle\n g,\n(\L^{m-2}u_i)\rangle\dm =\ino
g\L u_i\L^{m-2}\langle\n g,\n u_i\rangle\dm \qquad \qquad \qquad \\
\qquad \qquad \qquad\nonumber   &=&\ino
\L^{m-2}(g\L u_i)\langle\n g,\n u_i\rangle\dm\\
  \nonumber&=&
\ino\langle\n g,\n u_i\rangle\(2(m-2)\langle\n
g,\n(\L^{m-2}u_i)\rangle+\L^{m-1}u_i\)\dm\\
  &=&
\ino\((m-3)\langle\n g,\n u_i\rangle\langle\n
g,\n(\L^{m-2}u_i)\rangle-\frac{1}{2}u_i\L^{m-1}u_i\)\dm
\end{eqnarray}
and
\begin{eqnarray}\label{c14}
&&\nonumber\ino g^2\L u_i\L^{m-1}u_i\dm\\\nonumber&=&\ino u_i\L (g^2\L^{m-1}u_i)\dm
\\\nonumber&=&\ino u_i\(2\L^{m-1}u_i+g^2\L^m u_i+4g\langle\n g,\n (\L^{m-1}u_i)\rangle\)\dm
\\\nonumber&=&\ino \(2u_i\L^{m-1}u_i+(-1)^{m-1}\Lambda_ig^2u_i\L u_i+4gu_i\langle\n g,\n (\L^{m-1}u_i)\rangle\)\dm
\\\nonumber&=&\ino \(2u_i\L^{m-1}u_i+(-1)^{m-1}\Lambda_i(g^2|\n u_i|^2+u_i^2)+4gu_i\langle\n g,\n (\L^{m-1}u_i)\rangle\)\dm\\&=&\ino \((-1)^{m-1}\Lambda_i(g^2|\n u_i|^2-u_i^2)+4(m-1)\langle\n g,\n u_i\rangle\langle\n g,\n
(\L^{m-2}u_i)\rangle\)\dm.\qquad
\end{eqnarray}
Substituting (\ref{c12})-(\ref{c14}) into (\ref{c8}), we have
\begin{eqnarray}\label{c15}
\nonumber&&\ino\psi_{ i}(-\L)^m \psi_{ i}\dm\\\nonumber
&=&(-1)^m\ino(-m+1)\(u_i\L^{m-1} u_i+(2m^2-4m+3)\langle\n g,\n
u_i\rangle\left\langle\n g,\n\(\L^{m-2} u_i\)\right\rangle\)\dm
\\&&+\Lambda_i\ino \(g^2|\n u_i|^2+u_i^2\)\dm-\sj \La_j
 b_{ ij}^{2}.
\end{eqnarray}
It is easy to see that
\begin{eqnarray*}\label{c16}
\|g\n u_i\|^2=\|\n h_{ i}\|^2+\|\mathbf{W}_{ i}\|^2, \qquad \|\n h_{
i}\|^2=\|\n \psi_{ i}\|^2+\sj  b_{ ij}^2.
\end{eqnarray*}
Substituting the above equalities into  (\ref{c15}) yields
\begin{eqnarray}\label{c17}
\nonumber&&\(\La_{k+1}-\La_i\)\|\n \psi_i\|^2\\\nonumber
&=&(-1)^m\ino(-m+1)\(u_i\L^{m-1} u_i+(2m^2-4m+3)\langle\n g,\n
u_i\rangle\left\langle\n g,\n\(\L^{m-2} u_i\)\right\rangle\)\dm
\\&&-\Lambda_i\(\|u_i\|^2-\|\mathbf{W}_{ i}\|^2\)-\sj (\La_i-\La_j)
 b_{ ij}^2.
\end{eqnarray}
Let $\mathbf{A}_i=\n(gu_i-h_i)$. Then $u_i\n
g=\mathbf{A}_i-\mathbf{W}_i$, and we have
\begin{eqnarray}\label{c18}
\|u_i\|^2=\|u_i\n g\|^2=\|\mathbf{W}_i\|^2+\|\mathbf{A}_i\|^{2}.
\end{eqnarray}
Since $\ino \langle\n \langle\n g,\n u_i\rangle,\mathbf{W}_i\rangle\dm=0$, we can get
\begin{eqnarray*}
\nonumber2\|\langle\n g,\n u_i\rangle\|^2&=&-2\ino\langle u_i\n g,\n\langle\n g,\n u_i\rangle\rangle\dm
=-2\ino\langle \mathbf{A}_i,\n\langle\n g,\n u_i\rangle\rangle\dm
\\&\leq&\Lambda_i^{\frac{1}{m-1}}\|\mathbf{A}_i\|^2+\Lambda_i^{-\frac{1}{m-1}}\|\n\langle\n g,\n u_i\rangle\|^2,
\end{eqnarray*}
which implies
\begin{eqnarray}\label{c19}
-\Lambda_i\|\mathbf{A}_i\|^2\leq-2\Lambda_i^{\frac{m-2}{m-1}}\|\langle\n
g,\n u_i\rangle\|^2+\Lambda_i^{\frac{m-3}{m-1}}\|\n\langle\n g,\n
u_i\rangle\|^{2}.
\end{eqnarray}
Putting (\ref{c18}) and (\ref{c19}) into (\ref{c17}), we have
\begin{eqnarray}\label{c20}
\nonumber&&\(\La_{k+1}-\La_i\)\|\n \psi_i\|^2\\\nonumber
&=&(-1)^m\ino(-m+1)\(u_i\L^{m-1} u_i+(2m^2-4m+3)\langle\n g,\n
u_i\rangle\left\langle\n g,\n\(\L^{m-2} u_i\)\right\rangle\)\dm
\\&&-2\Lambda_i^{\frac{m-2}{m-1}}\|\langle\n g,\n u_i\rangle\|^2+\Lambda_i^{\frac{m-3}{m-1}}\|\n\langle\n g,\n u_i\rangle\|^2-\sj (\La_i-\La_j)
 b_{ ij}^2.
\end{eqnarray}

Setting
$Z_{ i}=\n\langle\n g,\n
u_i\rangle$,  we can obtain
\begin{eqnarray}\label{c21}
\nonumber c_{ ij}&=&\inm \la Z_{ i},\n
u_j\ra\dm=\inm\langle\n\langle\n g,\n u_i\rangle,\n
u_j\rangle\dm\\\nonumber&=&-\inm u_j\L\langle\n g,\n
u_i\rangle\dm\\\nonumber&=&-\inm u_j\langle\n g,\n (\L
u_i)\rangle\dm\\\nonumber&=&\inm \L u_i\langle\n g,\n
u_j\rangle\dm\\\nonumber&=& -\inm\langle\n\langle\n g,\n
u_j\rangle,\n
u_i\rangle\dm\\\nonumber&=&-\inm\left\langle\n\langle\n g,\n
u_j\rangle,\n u_i\right\rangle\dm\\&=&-c_{ji}.
\end{eqnarray}
and
\begin{eqnarray}
\nonumber -2\inm\langle g\n u_i,Z_{
i}\rangle\dm=-2\inm\langle g\n u_i,\n\langle\n g,\n
u_i\rangle\rangle=1.
\end{eqnarray}
On the other hand,
\begin{eqnarray}\label{c22}
\nonumber 1&=&-2\inm\la g\n u_i,Z_{ i}\ra\dm\\\nonumber&=&-2\inm\la\n h_{
i}+\mathbf{W}_{ i},Z_{ i}\ra\dm\\\nonumber&=&-2\inm\la\n h_{
i},Z_{ i}\ra\dm-2\inm\left\langle\mathbf{W}_{ i},\n\langle\n g,\n
u_i\rangle\right\rangle\dm\\\nonumber&=&-2\inm\langle\n
h_{ i},Z_{ i}\rangle\dm\\\nonumber&=&-2\inm\left\la\n \psi_{ i}+\sum_{j=1}^kb_{
ij}\n u_j,Z_{ i}\right\ra\dm\\\nonumber&=&-2\inm\left\la\n \psi_{ i},Z_{
i}\right\ra\dm-2\sum_{j=1}^kb_{ ij}c_{ ij}\\&=&-2\inm\left\la\n \psi_{ i},Z_{
i}-\sum_{j=1}^kc_{ ij}\n u_j\right\ra\dm-2\sum_{j=1}^kb_{ ij}c_{
ij}.
\end{eqnarray}
Then by the Schwarz inequality and (\ref{c20})-(\ref{c22}), for any
positive constant $\delta$, we can get
\begin{eqnarray}\label{c23}
\nonumber&&\(\La_{k+1}-\La_i\)^2\(1+2\sum_{j=1}^kb_{
ij}c_{ ij}\)\\\nonumber&=&\(\La_{k+1}-\La_i\)^2\left\{\inm
-2\left\la\n \psi_{ i},Z_{ i}-\sum_{j=1}^kc_{ij}\n
u_j\right\ra\right\}\\\nonumber&\leq&\delta\(\La_{k+1}-\La_i\)^3\|\n
\psi_{ i}\|^2+\frac{\(\La_{k+1}-\La_i\)}{\delta}\left\|Z_{
i}-\sum_{j=1}^kc_{ ij}\n
u_j\right\|^2\\\nonumber&\leq&\delta\(\La_{k+1}-\La_i\)^2\Bigg\{(-1)^m\ino(-m+1)\Big(u_i\L^{m-1} u_i\\\nonumber&&+(2m^2-4m+3)\langle\n g,\n
u_i\rangle\left\langle\n g,\n\(\L^{m-2} u_i\)\right\rangle\Big)\dm
\\\nonumber&&-2\Lambda_i^{\frac{m-2}{m-1}}\|\langle\n g,\n u_i\rangle\|^2+\Lambda_i^{\frac{m-3}{m-1}}\|\n\langle\n g,\n u_i\rangle\|^2\Bigg\}+\sj(\Li-\Lj)b_{ij}^2\\&&+\frac{\(\La_{k+1}-\La_i\)}{\delta}\|Z_{
i}\|^2+\frac{\(\La_{k+1}-\La_i\)}{\delta}\sum_{j=1}^kc_{
ij}^2.
\end{eqnarray}
Since $b_{ ij}=b_{ ji}, c_{ ij}=-c_{ ji},$ summing over $i$ from 1  to $k$ in
(\ref{c23}), the inequality
\begin{eqnarray*}\label{c24}
\nonumber&&\sum_{i=1}^k\(\La_{k+1}-\La_i\)^2\\\nonumber&\leq&\delta\sum_{i=1}^k\(\La_{k+1}-\La_i\)^2\Bigg\{(-1)^m\ino(-m+1)\Big(u_i\L^{m-1} u_i\\\nonumber&&+(2m^2-4m+3)\langle\n g,\n
u_i\rangle\left\langle\n g,\n\(\L^{m-2} u_i\)\right\rangle\Big)\dm
\\\nonumber&&-2\Lambda_i^{\frac{m-2}{m-1}}\|\langle\n g,\n u_i\rangle\|^2+\Lambda_i^{\frac{m-3}{m-1}}\|\n\langle\n g,\n u_i\rangle\|^2\Bigg\}\\&&+\sum_{i=1}^k\frac{\(\La_{k+1}-\La_i\)}{\delta}\|\n\langle\n g,\n u_i\rangle\|^2.
\end{eqnarray*}
holds.
This completes the proof of Theorem \ref{theorem3.1}.
\end{proof}

\begin{lemma}\label{lemma3.2}
Under the assumptions of Theorem \ref{theorem3.1}, we have
\begin{eqnarray}\label{c25}
\|\n\langle\n g,\n u_i\rangle\|^2\leq\Li^{\frac{1}{m-1}}
\end{eqnarray}
and
\begin{eqnarray}\label{c26}
\ino\langle\n g, \n((-\L)^{m-2} u_i)\rangle\langle\n g,\n
u_i\rangle\dm\leq\Li^{\frac{m-2}{m-1}}.
\end{eqnarray}
\end{lemma}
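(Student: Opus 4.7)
I would prove both bounds by reducing them to Lemma~\ref{lemma2.1} and the eigenvalue equation, using three standard ingredients: the identity (\ref{b5}) (which holds by the assumptions on $g$), the pointwise bound $|\langle\n g,\n h\rangle|\le|\n h|$ coming from $|\n g|^2=1$, and integration by parts in the measure $d\mu$. Setting $v_i:=\langle\n g,\n u_i\rangle$, the Dirichlet conditions on $u_i$ force $\n u_i|_{\p\O}=0$, so $v_i|_{\p\O}=0$; more generally $v_i$ vanishes to order $m-1$ on $\p\O$, which keeps the integrations by parts below boundary-term free.

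For (\ref{c25}), I would start from
\[
\|\n v_i\|^2=-\ino v_i\,\L v_i\,d\mu=-\ino v_i\,\langle\n g,\n\L u_i\rangle\,d\mu,
\]
the second equality being (\ref{b5}). One more integration by parts, using $\L g=0$ to collapse $\mathrm{div}_\phi(v_i\n g)$ to $\langle\n v_i,\n g\rangle$, gives $\|\n v_i\|^2=\ino\L u_i\,\langle\n v_i,\n g\rangle\,d\mu$. Pointwise $|\langle\n v_i,\n g\rangle|\le|\n v_i|$ followed by Cauchy--Schwarz in $L^2$ yields $\|\n v_i\|^2\le\|\L u_i\|_{L^2}\|\n v_i\|_{L^2}$, so $\|\n v_i\|^2\le\|\L u_i\|_{L^2}^2$. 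An integration by parts (boundary-free thanks to $u_i|_{\p\O}=\p u_i/\p\vec\nu|_{\p\O}=0$) identifies $\|\L u_i\|_{L^2}^2$ with $\ino u_i(-\L)^2u_i\,d\mu$; Lemma~\ref{lemma2.1} at $k=2$ bounds this by $\La_i^{1/(m-1)}$ when $m\ge3$. The $m=2$ case follows from the eigenvalue equation $(-\L)^2u_i=\La_i(-\L)u_i$ together with $\ino u_i(-\L)u_i\,d\mu=1$, giving the same bound $\La_i=\La_i^{1/(m-1)}$.

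For (\ref{c26}), iterating (\ref{b5}) gives $\langle\n g,\n((-\L)^{m-2}u_i)\rangle=(-\L)^{m-2}v_i$, so the integral equals $\ino v_i(-\L)^{m-2}v_i\,d\mu$. I would then split the $m-2$ powers of $(-\L)$ symmetrically between the two factors by successive integrations by parts, writing $m-2=s+t$ with $|s-t|\le1$, and apply Cauchy--Schwarz to bound the result by $\|(-\L)^sv_i\|_{L^2}\|(-\L)^tv_i\|_{L^2}$. Each factor is handled the same way: (\ref{b5}) iteratively plus $|\n g|=1$ give $\|(-\L)^jv_i\|_{L^2}^2\le\ino|\n((-\L)^ju_i)|^2\,d\mu$, and a further sequence of integrations by parts on $u_i$ rewrites the right-hand side as $\ino u_i(-\L)^{2j+1}u_i\,d\mu$. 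When $2j+1\le m-1$, Lemma~\ref{lemma2.1} yields $\La_i^{2j/(m-1)}$; in the remaining case $2j+1=m$ (which only occurs for the $t$-factor when $m$ is odd), the eigenvalue equation gives exactly $\La_i$. Multiplying the two factors, the exponents combine to $(m-2)/(m-1)$ in both parities, proving (\ref{c26}); the $m=2$ case is trivial and reduces to $\ino v_i^2\,d\mu\le\ino|\n u_i|^2\,d\mu=1$.

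The only delicate point is verifying that each integration by parts above is boundary-term free; this reduces to checking the vanishing order of $(-\L)^rv_i$ (order $m-1-2r$) and $(-\L)^ru_i$ (order $m-2r$) at $\p\O$ at every step. The symmetric splitting $s+t=m-2$ with $|s-t|\le1$ keeps all intermediate quantities in the range where the required vanishing holds, by the same parity-based bookkeeping that underlies the proof of Lemma~\ref{lemma2.1}.
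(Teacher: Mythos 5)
Your proof is correct and takes essentially the same route as the paper: (\ref{c25}) via (\ref{b5}), integration by parts, the pointwise bound from $|\n g|^2=1$, Cauchy--Schwarz and Lemma \ref{lemma2.1}, and (\ref{c26}) by splitting the $m-2$ powers of $-\L$ near-symmetrically according to the parity of $m$ before applying Cauchy--Schwarz, Lemma \ref{lemma2.1}, and the eigenvalue equation for the top power. The only organizational difference is that the paper controls the auxiliary quantity $\ino\left\langle\n g,\n\left\langle\n g,\n\((-\L)^{p-1}u_i\)\right\rangle\right\rangle^2\dm$ by a self-improving Cauchy--Schwarz, whereas you bound $\|(-\L)^j\langle\n g,\n u_i\rangle\|^2$ directly; your explicit treatment of the edge cases (using the eigenvalue equation when the exponent falls outside the range of Lemma \ref{lemma2.1}, e.g.\ for $m=2$) is, if anything, slightly more careful than the paper's.
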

\begin{proof}
By Lemma \ref{lemma2.1} and applying the Schwarz inequality, we have
\begin{eqnarray}
\nonumber\|\n\langle\n g,\n u_i\rangle\|^2&=& \ino\langle\n g,\n
u_i\rangle\L\langle\n g,\n u_i\rangle\dm\\\nonumber&=& \ino\langle\n
g,\n u_i\rangle\langle\n g,\n \L u_i\rangle\dm\\\nonumber&=& \ino\L
u_i\langle\n g,\n\langle\n g,\n u_i\rangle\rangle\dm\\\nonumber&\leq&
\left\{\ino(\L u_i)^2\dm\right\}^{\frac{1}{2}}\left\{\ino\langle\n
g,\n\langle\n g,\n
u_i\rangle\rangle^2\dm\right\}^{\frac{1}{2}}\\\nonumber&\leq&
\left\{\ino(\L u_i)^2\dm\right\}^{\frac{1}{2}}\left\{\ino|\n
g|^2|\n\langle\n g,\n
u_i\rangle|^2\dm\right\}^{\frac{1}{2}}\\\nonumber&=& \left\{\ino\L^2
u_iu_i\dm\right\}^{\frac{1}{2}}\left\{\ino|\n\langle\n g,\n
u_i\rangle|^2\dm\right\}^{\frac{1}{2}}\\\nonumber&=&
\Li^{\frac{1}{2(m-1)}}\left\{\|\n\langle\n g,\n
u_i\rangle\|^2\right\}^{\frac{1}{2}},
\end{eqnarray}
which implies
\begin{eqnarray}
\|\n\langle\n g,\n u_i\rangle\|^2\leq\Li^{\frac{1}{m-1}}.
\end{eqnarray}
When $m=2p$, $p\in\mathbb{Z}_{+}$ with $\mathbb{Z}_{+}$ the set of
all positive integers, we have
\begin{eqnarray}
\nonumber&&\ino \left\langle\n g, \n \left\langle\n g,\n
\((-\L)^{p-1}u_i\)\right\rangle\right\rangle^2\dm\\\nonumber&\leq&\ino|\n
g|^2\left|\n \left\langle\n g,\n
\((-\L)^{p-1}u_i\)\right\rangle\right|^2\dm\\\nonumber&=&\ino
\left\langle\n g,\n \((-\L)^{p}u_i\)\right\rangle\left\langle\n g,\n
\((-\L)^{p-1}u_i\)\right\rangle\dm\\\nonumber&=&\ino-(-\L)^{p}u_i
\left\langle\n g,\n \left\langle\n g,\n
\((-\L)^{p-1}u_i\)\right\rangle\right\rangle\dm
\\\nonumber&\leq&\left\{\ino
\((-\L)^{p}u_i\)^2\dm\right\}^{\frac{1}{2}}\left\{\ino\left\langle\n
g,\n\left\langle\n g,\n
\((-\L)^{p-1}u_i\)\right\rangle\right\rangle^2\dm\right\}^{\frac{1}{2}}
\\\nonumber&\leq&\left\{\ino u_i
(-\L)^{2p}u_i\dm\right\}^{\frac{1}{2}}\left\{\ino|\n
g|^2\left|\n\left\langle\n g,\n
\((-\L)^{p-1}u_i\)\right\rangle\right|^2\dm\right\}^{\frac{1}{2}}
\\\nonumber&=&\Li^{\frac{1}{2}}\left\{\ino\left\langle\n
g,\n\left\langle\n g,\n
\((-\L)^{p-1}u_i\)\right\rangle\right\rangle^2\dm\right\}^{\frac{1}{2}},
\end{eqnarray}
which implies
 \begin{eqnarray} \label{c28}
  \ino \left\langle\n g, \n
\left\langle\n g,\n
\((-\L)^{p-1}u_i\)\right\rangle\right\rangle^2\dm\leq\Li.
\end{eqnarray}
 Together with (\ref{c28}), we have
\begin{eqnarray}\label{c29}
\nonumber &&\ino\langle\n g, \n\((-\L)^{m-2} u_i\)\rangle\langle\n
g,\n u_i\rangle\dm\\\nonumber &=&\ino\langle\n g, \n\((-\L)^{p-1}
u_i\)\rangle\langle\n g,\n
((-\L)^{p-1}u_i)\rangle\dm\\\nonumber&=&\ino-(-\L)^{p-1}
u_i\left\langle\n g, \left\langle\n g,\n
\((-\L)^{p-1}u_i\)\right\rangle\right\rangle\dm\\\nonumber
&\leq&\left\{\ino \((-\L)^{p-1}
u_i\)^2\dm\right\}^{\frac{1}{2}}\left\{\ino \left\langle\n g, \n
\left\langle\n g,\n
\((-\L)^{p-1}u_i\)\right\rangle\right\rangle^2\dm\right\}^{\frac{1}{2}}\\\nonumber
&\leq&\Li^{\frac{m-3}{2(m-1)}}\Li^{\frac{1}{2}}
=\Li^{\frac{m-2}{m-1}}.
\end{eqnarray}
When $m=2p+1$, $p\in\mathbb{Z}_{+}$,
 \begin{eqnarray} \nonumber&&\ino \left\langle\n g, \n
\left\langle\n g,\n
\((-\L)^{p-1}u_i\)\right\rangle\right\rangle^2\dm\\\nonumber&\leq&\ino|\n
g|^2\left|\n \left\langle\n g,\n
\((-\L)^{p-1}u_i\)\right\rangle\right|^2\dm\\\nonumber&=&\ino
\left\langle\n g,\n \((-\L)^{p}u_i\)\right\rangle\left\langle\n g,\n
\((-\L)^{p-1}u_i\)\right\rangle\dm\\\nonumber&=&\ino-(-\L)^{p}u_i
\left\langle\n g,\n \left\langle\n g,\n
\((-\L)^{p-1}u_i\)\right\rangle\right\rangle\dm
\\\nonumber&\leq&\left\{\ino
\((-\L)^{p}u_i\)^2\dm\right\}^{\frac{1}{2}}\left\{\ino\left\langle\n
g,\n\left\langle\n g,\n
\((-\L)^{p-1}u_i\)\right\rangle\right\rangle^2\dm\right\}^{\frac{1}{2}}
\\\nonumber&\leq&\left\{\ino
u_i(-\L)^{2p}u_i\dm\right\}^{\frac{1}{2}}\left\{\ino|\n
g|^2\left|\n\left\langle\n g,\n
\((-\L)^{p-1}u_i\)\right\rangle\right|^2\dm\right\}^{\frac{1}{2}}
\\\nonumber&\leq&\Li^{\frac{m-2}{2(m-1)}}\left\{\ino\left\langle\n
g,\n\left\langle\n g,\n
\((-\L)^{p-1}u_i\)\right\rangle\right\rangle^2\dm\right\}^{\frac{1}{2}},
\end{eqnarray}
which implies
 \begin{eqnarray}\label{c30} \ino \left\langle\n g,
\n \left\langle\n g,\n
\((-\L)^{p-1}u_i\)\right\rangle\right\rangle^2&\leq&\Li^{\frac{m-2}{m-1}}.\end{eqnarray}
Together with (\ref{c30}), we have
\begin{eqnarray}\label{c31}
\nonumber &&\ino\left\langle\n g, \n\((-\L)^{p-2}
u_i\)\right\rangle\langle\n g,\n u_i\rangle\dm=\ino\left\langle\n g,
\n\((-\L)^{p} u_i\)\right\rangle\left\langle\n g,\n
((-\L)^{p-1}u_i)\right\rangle\dm\\\nonumber&=&\ino-(-\L)^{p}
u_i\left\langle\n g, \left\langle\n g,\n
\((-\L)^{p-1}u_i\)\right\rangle\right\rangle\dm\\\nonumber
&\leq&\left\{\ino \((-\L)^{p}
u_i\)^2\dm\right\}^{\frac{1}{2}}\left\{\ino \left\langle\n g, \n
\left\langle\n g,\n
\((-\L)^{p-1}u_i\)\right\rangle\right\rangle^2\dm\right\}^{\frac{1}{2}}\\\nonumber
&\leq&\Li^{\frac{m-2}{2(m-1)}}\left\{\ino |\n g|^2\left|\n
\left\langle\n g,\n
\((-\L)^{p}u_i\)\right\rangle\right|^2\dm\right\}^{\frac{1}{2}}
\\&\leq&\Li^{\frac{m-2}{m-1}}.
\end{eqnarray}
It follows from (\ref{c29}) and (\ref{c31}) that
\begin{eqnarray*}\label{c32}
\ino\langle\n g, \n((-\L)^{m-2} u_i)\rangle\langle\n g,\n
u_i\rangle\dm\leq\Li^{\frac{m-2}{m-1}}.
\end{eqnarray*}
The proof is finished.
\end{proof}

Using (\ref{c1}), (\ref{c25}) and (\ref{c26}), we can get:
\begin{lemma}\label{lemma3.3}
Under the assumptions of Theorem \ref{theorem3.1}, we have
\begin{eqnarray*}\label{c33}
\nonumber\si(\Lk-\Li)^2&\leq&2\left\{(2m^2-3m+3)\si(\Lk-\Li)^2
\Li^{\frac{m-2}{m-1}}\right\}^{\frac{1}{2}}\\&&\times\left\{\si
(\Lk-\Li)\Li^{\frac{1}{m-1}}\right\}^{\frac{1}{2}}.
\end{eqnarray*}
\end{lemma}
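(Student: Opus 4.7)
The plan is to derive \eqref{c33} by combining the general universal inequality (\ref{c1}) of Theorem \ref{theorem3.1} with the two estimates (\ref{c25}) and (\ref{c26}) of Lemma \ref{lemma3.2}, and then to optimize over the free parameter $\delta>0$ in (\ref{c1}) by an AM--GM step. The strategy is therefore very short: bound the ``curly-bracket'' coefficient of $\delta(\Lk-\Li)^2$ in (\ref{c1}) by a universal multiple of $\Li^{(m-2)/(m-1)}$, bound the other sum by a universal multiple of $\si(\Lk-\Li)\Li^{1/(m-1)}$, and then minimize over $\delta$.

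The first task is to show that the curly-bracketed quantity in (\ref{c1}) is at most $(2m^2-3m+3)\,\Li^{(m-2)/(m-1)}$. This is done termwise. First, using the parity identities $(-1)^m\L^{m-1}u_i=-(-\L)^{m-1}u_i$ and $(-1)^m\L^{m-2}u_i=(-\L)^{m-2}u_i$, rewrite both integral summands with $-\L$ in place of $\L$. Then Lemma \ref{lemma2.1} with $k=m-1$ bounds
$\int_{\Omega}u_i(-\L)^{m-1}u_i\,d\mu\leq\Li^{(m-2)/(m-1)}$, which contributes $(m-1)\Li^{(m-2)/(m-1)}$. Next, (\ref{c26}) bounds the mixed $\langle\n g,\n u_i\rangle\langle\n g,\n(\L^{m-2}u_i)\rangle$ integral, contributing $(2m^2-4m+3)\Li^{(m-2)/(m-1)}$. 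The summand $-2\Li^{(m-2)/(m-1)}\|\langle\n g,\n u_i\rangle\|^2$ is manifestly non-positive and is discarded. Finally, (\ref{c25}) gives $\Li^{(m-3)/(m-1)}\|\n\langle\n g,\n u_i\rangle\|^2\leq\Li^{(m-3)/(m-1)}\cdot\Li^{1/(m-1)}=\Li^{(m-2)/(m-1)}$, contributing the remaining~$1$. Adding the contributions yields $(m-1)+(2m^2-4m+3)+1=2m^2-3m+3$, as desired.

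Second, applying (\ref{c25}) once more to the other sum in (\ref{c1}) gives
\[
\sum_{i=1}^{k}\frac{\La_{k+1}-\La_i}{\delta}\|\n\langle\n g,\n u_i\rangle\|^2\leq\frac{1}{\delta}\si(\Lk-\Li)\Li^{1/(m-1)}.
\]
Substituting both estimates back into (\ref{c1}) produces, for every $\delta>0$, an inequality of the form
\[
\si(\Lk-\Li)^2\leq\delta(2m^2-3m+3)\si(\Lk-\Li)^2\Li^{(m-2)/(m-1)}+\frac{1}{\delta}\si(\Lk-\Li)\Li^{1/(m-1)}.
\]
Choosing $\delta=\sqrt{C/B}$ with $B=(2m^2-3m+3)\si(\Lk-\Li)^2\Li^{(m-2)/(m-1)}$ and $C=\si(\Lk-\Li)\Li^{1/(m-1)}$ minimizes the right-hand side at value $2\sqrt{BC}$, which is precisely (\ref{c33}).

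The main technical obstacle is the sign bookkeeping in the first task: the prefactor $(-1)^m(-m+1)$ in (\ref{c1}) must be combined with the parity identities relating $\L^j$ to $(-\L)^j$, together with the fact that, after integration by parts, $\int_\Omega v_i(-\L)^{m-2}v_i\,d\mu\geq 0$ for $v_i=\langle\n g,\n u_i\rangle$, in such a way that each retained summand is being bounded above (not below) by the stated estimates and each discarded summand genuinely has non-positive sign. Once this careful sign analysis is carried out, the bound on the curly bracket and the final AM--GM optimization are essentially automatic.
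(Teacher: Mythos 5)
Your proposal is correct and follows exactly the route the paper intends: the paper states Lemma \ref{lemma3.3} with no written proof beyond the remark that it follows from (\ref{c1}), (\ref{c25}) and (\ref{c26}), and your termwise accounting $(m-1)+(2m^2-4m+3)+1=2m^2-3m+3$ for the curly-bracketed coefficient, followed by discarding the nonpositive summand and choosing $\delta=\sqrt{C/B}$, is precisely the computation that reproduces the stated constant. The sign bookkeeping you flag at the end is indeed the only delicate point (as literally parenthesized, the prefactor $(-1)^m(-m+1)$ in (\ref{c1}) also multiplies the $(2m^2-4m+3)$ mixed term, which would make that summand nonpositive and hence discardable), but on either reading the claimed upper bound survives, so your argument stands.
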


Now, we would like to give the proof of Theorem \ref{theorem 4.1}.
However, before that, we need the following fact:

\begin{itemize}

\item \textbf{FACT} (\cite[Theorem 1.1]{FLZ})
\emph{Let $(M, \langle,\rangle)$ be a complete connected Riemannian
manifold with weighted Ricci curvature $\Ric^\phi \geq 0$ for some
$\phi\in C^2(M)$ which is bounded above uniformly on $M$. Then it
splits isometrically as $N \times \R^l$, where $N$ is some complete
Riemannian manifold without lines and $\R^l$ is the $l$-Euclidean
space. Furthermore, the function $\phi$ is constant on each $\R^l$
in this splitting.}

\end{itemize}
By \textbf{FACT}, if  $\Ric^\phi \geq 0$ for some $\phi\in C^2(M)$
which is bounded above uniformly on $M$, we know that
$(M,\langle,\rangle,\dm)$ splits isometrically as $N \times \R^l$.
Let $\overline{x}=(t, x)$ be the standard coordinate functions of
$N\times\R^l$, where $t\in N$ and $ x=(x_1,\cdots,x_l)\in \R^l$.
Since $\phi$ is constant on each $\R^l$ in this splitting, for
$\a=1,\cdots,l$ and for any vector field $Y\in \mathscr{X}(M)$, we
have
\begin{eqnarray}\label{d1}
\L x_\a=\D x_\a-\left\langle \n \phi, \n x_\a
\right\rangle=0,\quad~~|\n x_\a|=1,\quad~~\Ric^\phi(\n x_\a, Y)=0.
 \end{eqnarray}

 \begin{remark}
\rm{ In fact, the above \textbf{FACT} can be strengthened to be the
following version:
\begin{itemize}

\item (\cite[Theorem 6.1]{WW}, see also \cite{LC1,LC2}) \emph{If $\Ric^\phi \geq 0$ for some bounded $\phi$ and $M$ contains a line, then $M=N^{n-1} \times \R$ and $\phi$ is constant along the
line.}

\end{itemize}
Hence, one can find a special function $g$ on $M$ satisfying
(\ref{d1}) also (if $\Ric^\phi \geq 0$ for some bounded $\phi$ and
$M$ contains a line), which implies that a general inequality in
Theorem \ref{theorem3.1} can be used in the setting. In fact, if
$\Ric^\phi \geq 0$ for some bounded $\phi$ and $M$ contains a line,
 we know that $(M,\langle,\rangle,\dm)$
splits isometrically as $N \times \R$. Let $\overline{x}=(t, x_1)$
be the standard coordinate functions of $N\times\R$, where $t\in N$
and $ x_1\in \R$. As (\ref{d1}), for any vector field $Y\in
\mathscr{X}(M)$, we have
\begin{eqnarray*}
\L x_1=\D x_1-\left\langle \n \phi, \n x_1
\right\rangle=0,\quad~~|\n x_1|=1,\quad~~\Ric^\phi(\n x_1, Y)=0.
 \end{eqnarray*}
}
\end{remark}

\begin{proof} [Proof of Theorem \ref{theorem 4.1}]
Clearly, (\ref{d1}) shows the existence of special function
mentioned in Theorem \ref{theorem3.1} if the constraints on the
weighted Ricci curvature $\mathrm{Ric}^{\phi}$ and the weighted
function $\phi$ in Theorem \ref{theorem 4.1} were satisfied. Hence,
by using Lemma \ref{lemma3.3} (since $M$ contains a line, $l$ cannot
be zero), the universal inequality (\ref{d3}) for the buckling
problem (\ref{a8}) follows directly.
\end{proof}

\section*{Acknowledgments}
\renewcommand{\thesection}{\arabic{section}}
\renewcommand{\theequation}{\thesection.\arabic{equation}}
\setcounter{equation}{0} \setcounter{maintheorem}{0}

This work is partially supported by the NSF of China (Grant Nos.
11801496 and 11926352), the Fok Ying-Tung Education Foundation
(China), Hubei Key Laboratory of Applied Mathematics (Hubei
University), the NSF of Hubei Provincial Department of Education
(Grant Nos. B2019211, B2016261), Research Team Project of Jingchu
University of Technology (Grant No. TD202006) and Research Project
of Jingchu University of Technology (Grant No. QDB201608).

\end{document}